\theoremstyle{plain}
\newtheorem{theorem}{Theorem}[section]
\newtheorem{lemma}[theorem]{Lemma}
\theoremstyle{definition}
\newtheorem{counter example}[theorem]{Counter Example}
\newtheorem{corollary}[theorem]{Corollary}
\numberwithin{equation}{section}
\begin{document}
\Large{
\title[$z^\circ$-ideals in intermediate rings]{$z^\circ$-ideals in intermediate rings of ordered field valued continuous functions}
\author[S. Bag]{Sagarmoy Bag}
\address{Department of Pure Mathematics, University of Calcutta, 35, Ballygunge Circular Road, Kolkata 700019, West Bengal, India}
\email{sagarmoy.bag01@gmail.com}

\author[S.K.Acharyya]{Sudip Kumar Acharyya}
\address{Department of Pure Mathematics, University of Calcutta, 35, Ballygunge Circular Road, Kolkata 700019, West Bengal, India}
\email{sdpacharyya@gmail.com}
\thanks{The first author thanks the NBHM, Mumbai-400 001, India, for financial support}

\author[D. Mandal]{Dhananjoy Mandal}
\address{Department of Pure Mathematics, University of Calcutta, 35, Ballygunge Circular Road, Kolkata 700019, West Bengal, India}
\email{dmandal.cu@gmail.com}

\subjclass[2010]{Primary 54C40; Secondary 46E25}

\keywords{intermediate ring, regular ring, totally ordered field, $z^\circ$-ideal, completely $F$-regular space, $P_F$-space, almost $P_F$-space }                                                                                                                                                                                                                                                                                                                                                                                                                                                                                                                                                                                                                                                                                                                                                                                                                                                                                                                                                                                                                                                                                                                                                                                                                                                                                                                                                                                                                                                                                                                                                                                                                                                                                                                                                                                                                                                                                                                                                                                                                                                                                                                                                                                                                                                                                                                                                                                                                                                                                                                                                                                                                                                                                                                                                                                                                                                                                                                                                                                                                                                                                                                                                                                                                                                                                                                                                                                                                                                                                                                                                                                                                                                                                                                                                                                                                                                                                                                                                                                                                                                                                                                                                                                                                                                                                                                                                                                                                                                                                                                     
\thanks {}

\maketitle
\section{Abstract}
A proper ideal $I$ in a commutative ring with unity is called a $z^\circ$-ideal if for each $a$ in $I$, the intersection of all minimal prime ideals in $R$ which contain $a$ is contained in $I$. For any totally ordered field $F$ and a completely $F$-regular topological space $X$, let $C(X,F)$ be the ring of all $F$-valued continuous functions on $X$ and $B(X,F)$ the aggregate of all those functions which are bounded over $X$. An explicit formula for all the $z^\circ$-ideals in $A(X,F)$ in terms of ideals of closed sets in $X$ is given. It turns out that an intermediate ring $A(X,F)\neq C(X,F)$ is never regular in the sense of Von-Neumann. This property further characterizes $C(X,F)$ amongst the intermediate rings within the class of $P_F$-spaces $X$. It is also realized that $X$ is an almost $P_F$-space if and only if each maximal ideal in $C(X,F)$ is $z^\circ$-ideal. Incidentally this property also characterizes $C(X,F)$ amongst the intermediate rings within the family of almost $P_F$-spaces. 

\section{introduction}

Let $F$ be a totally ordered field equipped with its order topology. For any topological space $X$ the set $C(X,F)$ of all $F$-valued continuous functions on $X$ constitutes a commutative lattice ordered ring with identity if the relevant operations are defined pointwise on $X$. The subset $B(X,F)$ of $C(X,F)$ containing all those functions $f$, which are bounded over $X$ in the sense that $\lvert f\rvert \leq \lambda $ for some $\lambda >0$ in $F$, makes a subring as well as a sublattice of $C(X,F)$ with the special choice $F=\mathbb{R}$, these two rings $C(X,F)$ and $B(X,F)$ reduce to the well known rings $C(X)$ of all real valued continuous functions on $X$ and its subring $C^*(X)$ of all bounded real valued continuous functions on $X$ respectively. There is a nice interplay existing between the topological structure of $X$ and the algebraic structure of $C(X)$ and $C^*(X)$ both. Problem of this kind are beautifully addressed in the classic monograph \cite{GJ}. We would like to mention in this context that a good many results related to this interplay are still valid in a wider setting if $C(X)$ is replaced by $C(X,F)$ and $C^*(X)$ by $B(X,F)$ for any totally ordered field $F$ and this is best realized if we stick to completely $F$-regular spaces $X$ is called completely $F$-regular if it is Hausdorff and given a point $x$ in $X$ and a closed set $K$ in $X$ missing the point $x$, there exists an $f$ in $B(X,F)$ such that $f(x)=0$ and $f(K)=1$. Thus complete $F$-regularity with the choice $F=\mathbb{R}$ reduces to complete regularity. However if $F\neq \mathbb{R}$, then $X$ becomes completely $F$-regular if and only if it is zero-dimensional. So zero-dimensionality of a Hausdorff space $X$ can be realized as a kind of separation axiom effected by $F$-valued continuous functions on $X$ with $F\neq \mathbb{R}$. Problems of this kind are already investigated in \cite{ACG2004}, \cite{AKPG2004} and \cite{BBNW1975}. A ring $A(X,F)$ that lies between the two rings $B(X,F)$ and $C(X,F)$ is called an intermediate ring in \cite{AR2016}. There is a particularly interesting common property shared by all these intermediate rings viz that the structure spaces of all such rings are one and the same. In \cite{AR2016}, this common structure space is realised as $\beta _F X$, the set of all $z_F$-ultrafilters on $X$ equipped with the Stone-topology. It may be mentioned that the structure space of a commutative ring $R$ with 1 stands for the set of all maximal ideals of $R$, endowed with the well known hull-kernel topology [ Chapter 7, exercise 7M, \cite{GJ}. Incidentally a $z_F$-ultrafilter on $X$ is a family of zero sets of $F$-valued continuous functions on $X$, which is maximal with respect to having finite intersection property. In the present article, our main intention is to highlight a few dissimilarities between the parent ring $C(X,F)$ and its proper intermediate subrings $A(X,F)$'s. Such a programme has already been initiated in the papers \cite{MSW2017}, \cite{SW2014}, \cite{SW2015}. In these papers the authors have characterized $C(X)$ amongst the intermediate rings lying between $C^*(X)$ and $C(X)$ within the categories of some well chosen class of spaces. Truely speaking, the present paper is in tune with this programme. To accomplish this, we need two particular types of ideals viz $z^\circ$-ideals and $z$-ideals in the intermediate rings $A(X,F)$'s mentioned above. An ideal unmodified in a ring, in this paper will always stand for a proper ideal; unless we specify the contrary.

Let $R$ be a commutative ring with unity. For each $'a'$ in $R$, let $P_a(M_a)$ stand for the intersection of all minimal prime ideals (maximal ideals) of $R$, which contain $a$. An ideal $I$ in $R$ is called a $z^\circ$-ideal (respectively $z$-ideal ) if for each $a$ in $I$, $P_a\subseteq I$ ( respectively $M_a\subseteq I$). It was proved in \cite{M1989} that a $z^\circ$-ideal of $R$ is a $z$-ideal if and only if the Jacobson radical of $R$ is zero. In particular in our situation each $z^\circ$-ideal in any intermediate ring $A(X,F)$ is a $z$-ideal. $R$ is called a reduced ring if 0 is the only nilpotent element of $R$. It is clear that each intermediate ring $A(X,F)$ is reduced. The following useful description of minimal prime ideals in $R$ due to kist \cite{K1963} is recorded in [\cite{HJ1965}, lemma 1.1]:

\begin{theorem}
A prime ideal $P$ in a commutative ring $R$ is a minimal prime ideal if and only if given $a\in P$, there is $b\in R\setminus P$ such that $ab$ is a nilpotent member of $R$( in particular $ab=0 $ if $R$ is reduced ).
\end{theorem}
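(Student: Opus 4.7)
The plan is to prove both directions directly, with the forward direction using a standard multiplicative-set argument and the reverse direction being a short application of primality.

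For the forward direction, assume $P$ is a minimal prime ideal and let $a\in P$. I would form the set
\[
T \;=\; \{\,b\,a^n : b\in R\setminus P,\; n\geq 0\,\},
\]
and first check $T$ is multiplicatively closed: $(b_1a^{n_1})(b_2a^{n_2})=b_1b_2\,a^{n_1+n_2}$, and $b_1b_2\notin P$ because $P$ is prime. Now I would argue that $0\in T$. Suppose instead that $0\notin T$. Then by the usual Zorn's lemma argument (or equivalently by localizing at $T$), there exists a prime ideal $Q$ of $R$ disjoint from $T$. Taking $b=1$ and $n=0$ shows $R\setminus P\subseteq T$, hence $Q\cap(R\setminus P)=\emptyset$, i.e.\ $Q\subseteq P$. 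But $a=1\cdot a\in T$, so $a\notin Q$, which forces $Q\subsetneq P$, contradicting the minimality of $P$. Therefore $0\in T$, so there exist $b\in R\setminus P$ and $n\geq 1$ with $ba^n=0$; multiplying by $b^{n-1}$ gives $b^na^n=0$, i.e.\ $(ab)^n=0$, so $ab$ is nilpotent. (When $R$ is reduced, nilpotence is the same as being $0$, giving the parenthetical conclusion.)

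For the reverse direction, assume every $a\in P$ admits $b\in R\setminus P$ with $ab$ nilpotent. Take any prime $Q\subseteq P$ and any $a\in P$. By hypothesis there is $b\notin P$ with $(ab)^n=0$ for some $n$; since $0\in Q$ and $Q$ is prime, $a^nb^n\in Q$ forces $a\in Q$ or $b\in Q$. The latter is impossible since $Q\subseteq P$ and $b\notin P$, so $a\in Q$. Hence $P\subseteq Q$, giving $Q=P$ and showing $P$ is minimal.

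The main obstacle is the forward direction, specifically justifying that $0\in T$. The cleanest way is a Zorn's lemma application to the family of ideals disjoint from $T$ (any maximal such ideal is prime), but one must carefully verify that this multiplicative set $T$ has been chosen so that a prime disjoint from it witnesses the non-minimality of $P$. Once $T$ is set up correctly, the rest of the argument is essentially bookkeeping. The reverse direction is straightforward and should take only a few lines.
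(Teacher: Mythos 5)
Your proof is correct. The paper itself gives no proof of this statement --- it is quoted as a known result of Kist, recorded in Henriksen--Jerison --- and your argument is the standard one: the multiplicative set $T=\{ba^n: b\in R\setminus P,\ n\ge 0\}$ together with the Zorn's lemma fact that an ideal maximal among those disjoint from a multiplicatively closed set is prime, plus the easy primality argument for the converse. The only points worth making explicit are that the family of ideals disjoint from $T$ is nonempty because $(0)$ qualifies when $0\notin T$, and that the exponent $n$ obtained from $ba^n=0$ must be at least $1$ because $b\notin P$ forces $b\neq 0$.
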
 

It follows from this theorem that each non zero element of a minimal prime ideal in a reduced ring $R$ is a divisor of zero. Consequently in such a ring $R$ all proper $z^\circ$-ideals contain divisors of zero of $R$ only. Incidentally it was recorded in \cite{AKA1999}, Example 3 that for a non compact space $X$, the ideal $C_K(X)$ of all functions in $C(X)$ with compact support is a $z^\circ$-ideal. In the present article, we have shown more generally that whenever $\mathcal{P}$ is an ideal of closed sets in $X$ in the following sense that if $A, B \in \mathcal{P}$ and $C$ is a closed in $X$ contained in $A$, then $A\cup B\in \mathcal{P}$ and $C\in \mathcal{P}$, it follows that $C_\mathcal{P}(X,F)=\{ f\in C(X,F): cl_X(X\setminus Z(f))\textit{ lies on } \mathcal{P}\}$ is a $z^\circ$-ideal of $C(X,F)$ and more generally for an intermediate ring $A(X,F), C_{\mathcal{P}}(X,F)\cap A(X,F)$ is a $z^\circ$-ideal of $A(X,F)$, here $Z(f)=\{ x\in X: f(x)=0\}$ is the zero set of $f$ in $C(X,F)$. It is interesting to note as we have observed that the converse of this statement is also true. This means that if $I$ is a proper $z^\circ$-ideal of $A(X,F)$, then there exists an ideal $\mathcal{P}[I]$ of closed sets in $X$ for which $C_{\mathcal{P}[I]}(X,F)\cap A(X,F)=I$ [Theorem \ref{4} of the present article ]. Thus an explicit description of $z^\circ$-ideals of intermediate rings $A(X,F)$ can be given via ideals of closed sets in $X$. On using this last fact, we have established that if $I$ is a proper ideal of $A(X,F)$, containing divisors of zero only, then $I$ can be extended to a $z^\circ$-ideal of $A(X,F)$ [ Theorem \ref{35} of the present article ]. This extends a result proved in \cite{AKA1999}, Theorem 2.5. It is well known that the ring $C(X)$ can be regular in the sense of Von-Neuman for a well designated class of spaces $X$ viz the class of $P$ spaces initiated in \cite{GJ}. More generally given a totally ordered field $F$, a completely $F$-regular space $X$ is called a $P_F$ space if the ring $C(X,F)$ is regular in Von-Neuman sense. For $F=\mathbb{R}$, $P_F$-spaces reduce to  $P$-spaces. Nevertheless if $F$ is a Cauchy complete totally ordered field with co-finality index equal to $\omega_0$, then $P_F$-spaces are nothing but $P$-spaces. Indeed there do exist plethora of non archimedean Cauchy complete totally ordered fields $F$ with co-finality index $\omega_0$. For more information about these $P_F$-spaces see \cite{AKPG2004}, one can ask a pertinent question: Can an intermediate ring $A(X,F)$, properly contained in $C(X,F)$ be ever regular? We have given a negative answer to this question by using some properties of $z^\circ$-ideals [Theorem \ref{6} of the present article ]. A special case of this result on choosing $F=\mathbb{R}$ reads: a ring $A(X)$ containing $C^*(X)$ and contained properly in $C(X)$ is never regular, which was proved recently in \cite{AB2013} and \cite{MSW2017}. Incidentally we have offered a new characterization of $P_F$-spaces in the following manner: a completely $F$-regular space $X$ is a $P_F$-space if and only if every ideal of $C(X,F)$ is a $z^\circ$-ideal ( Theorem \ref{8}). This property therefore characterizes in the class of $P_F$ spaces, the ring $C(X,F)$ amongst all the intermediate rings of $F$-valued continuous functions. As recorded in \cite{L1977}, $P$-spaces are fairly rare. A larger class of spaces, the almost $P$-spaces consisting precisely of those spaces in which non-empty zero sets have non-empty interior. ( equivalently every non-empty $G_{\delta}$ set has non-empty interior ) have been introduced in \cite{L1977}. Almost $P$-spaces are far more abundant than $P$-spaces ( see \cite{L1977}, for example of almost $P$ spaces ). These spaces have been characterized via $z^\circ$-ideals and $z$-ideals in \cite{AKA1999}, Theorem 2.14, in the following manner: A space $X$(Tychonoff) is almost $P$-space if and only if each $z$-ideal of $C(X)$ is a $z^\circ$-ideal. We have made a generalization of this fact by initiating corresponding to a totally ordered field $F$, an almost $P_F$-space $X$ as follows: a completely $F$-regular space $X$ is called an almost $P_F$-space if each non-empty zero set in $X$ of $F$-valued continuous functions has non-empty interior. It is easy to check by using the complete $F$-regularity of $X$ that $X$ is almost $P_F$-space if and only if each zero set in $X$ is regular closed i.e. for any $f\in C(X,F), Z(f)= cl_X (int_X Z(f))$. It is trivial that an almost $P_F$ space with $F=\mathbb{R}$ is just an almost $P$-space. We are not aware of any totally ordered field $F$ and a suitable completely $F$-regular space $X$ such that $X$ is almost $P_F$ space with out being an almost $P$-space. On the contrary, there exist pretty many totally ordered fields $F$, for which the class of almost $P_F$-spaces and almost $P$ spaces do coincide. We have offered a proof of this assertion [Theorem \ref{9}] in the present article. It is established in this paper [ Theorem \ref{12}] that a completely $F$-regular space $X$ is almost $P_F$ if and only if all the fixed maximal ideals of any prescribed intermediate ring $A(X,F)$ are $z^\circ$-ideals. This leads to a further characterization of almost $P_F$ spaces, [Theorem \ref{13}], which says that almost $P_F$ space $X$ are precisely those for which each maximal ideal in $C(X,F)$ is a $z^\circ$-ideal. Incidentally, this property characterizes $C(X,F)$ amongst all the intermediate rings of $F$-valued continuous functions on $X$ (Theorem \ref{14}). We conclude this introductory section by adopting the following notational convention: $Z(X,F)$ will stand for the family of all zero sets of functions in $C(X,F)$ and $A(X,F)$ will designate for a typical intermediate subring of $C(X,F)$. Also a space $X$ will stand for a completely $F$-regular space.

\section{$z^\circ$-ideal in intermediate rings}

We start with the following convenient formula for the basic $z^\circ$-ideal $P_a$ for an element $a$ of a reduced ring $R$ as recorded in \cite{AKA2007}, ( Proposition 1.5).

\begin{theorem}\label{1}
$P_a=\{ b\in R: Ann(a)\subseteq Ann(b)\}$, here $Ann(a)\equiv \{ c\in R: ac=0\}$ is the annihilator of $a$ in $R$.
\end{theorem}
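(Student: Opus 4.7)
The plan is to prove the two inclusions separately, leveraging Kist's description of minimal primes in the previous theorem together with the fact that in a reduced ring the intersection of all minimal prime ideals is $\{0\}$ (since the nilradical coincides with this intersection).

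For the inclusion $\{b : \mathrm{Ann}(a) \subseteq \mathrm{Ann}(b)\} \subseteq P_a$, I would fix $b$ with $\mathrm{Ann}(a) \subseteq \mathrm{Ann}(b)$ and take an arbitrary minimal prime $P$ with $a \in P$. By the preceding theorem applied to the reduced ring $R$, there exists $c \in R \setminus P$ with $ac = 0$, i.e.\ $c \in \mathrm{Ann}(a)$. The hypothesis then gives $c \in \mathrm{Ann}(b)$, so $bc = 0 \in P$; since $P$ is prime and $c \notin P$, we conclude $b \in P$. As $P$ was arbitrary among minimal primes containing $a$, this yields $b \in P_a$.

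For the reverse inclusion $P_a \subseteq \{b : \mathrm{Ann}(a) \subseteq \mathrm{Ann}(b)\}$, I would fix $b \in P_a$, pick any $c \in \mathrm{Ann}(a)$, and show $bc = 0$ by proving that $bc$ lies in every minimal prime $Q$ of $R$. The argument splits by cases: if $a \in Q$ then $b \in Q$ (since $b \in P_a$), so $bc \in Q$; if $a \notin Q$ then from $ac = 0 \in Q$ and primality we get $c \in Q$, so again $bc \in Q$. Hence $bc$ belongs to the intersection of all minimal primes, which is $\{0\}$ because $R$ is reduced, and so $bc = 0$, giving $c \in \mathrm{Ann}(b)$ as required.

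The main obstacle, insofar as there is one, is the first inclusion: one must have the right witness $c$, and this is precisely what Kist's theorem supplies, so invoking it is the crucial move. The second inclusion is essentially formal once one remembers that the nilradical of a reduced ring vanishes and equals the intersection of all minimal primes.
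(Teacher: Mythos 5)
Your proof is correct. Note that the paper itself offers no proof of this statement --- it is simply quoted from Azarpanah, Karamzadeh and Aliabad (Proposition 1.5 of that reference) --- so there is no argument in the paper to compare against; your two inclusions, using Kist's characterization of minimal primes to produce the witness $c$ for the forward direction and the vanishing of the nilradical (equal to the intersection of all minimal primes) of a reduced ring for the reverse, constitute a complete and standard self-contained proof.
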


By using the technique of proof of Lemma 2.1 in \cite{AKA1999} using Theorem \ref{1} and taking note of the fact that each bounded function in  $C(X,F)$ belongs to an intermediate ring $A(X,F)$, we get the following proposition:

\begin{theorem}\label{2}
For a space $X$ and for $f, g\in A(X,F)$, $Ann(f)\subseteq Ann(g)$ if and only if $int_X Z(f)\subseteq int_X Z(g)$. Here $Ann(f)$ stands for the annihilator of $f$ in $A(X,F)$.
\end{theorem}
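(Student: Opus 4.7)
The plan is to establish the two implications separately, using complete $F$-regularity for the harder direction; Theorem \ref{1} is not really needed for this statement itself, it is the subsequent combination of Theorems \ref{1} and \ref{2} that yields the useful description of $P_f$.

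For the easy direction, assume $int_X Z(f)\subseteq int_X Z(g)$ and take any $h\in Ann(f)$, so that $fh=0$ pointwise on $X$. Then $X\setminus Z(h)$ is open and, because $f$ must vanish wherever $h$ does not, is contained in $Z(f)$; being open it lies in $int_X Z(f)$, hence in $int_X Z(g)\subseteq Z(g)$. This forces $g(x)h(x)=0$ for every $x\in X$, so $h\in Ann(g)$.

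For the converse I would argue by contrapositive. Suppose $int_X Z(f)\not\subseteq int_X Z(g)$ and fix $x_0$ in the first set but not in the second. Choose an open neighbourhood $V$ of $x_0$ with $V\subseteq Z(f)$. Because $x_0$ is not interior to $Z(g)$, the open set $V$ cannot be contained in $Z(g)$, so there is a point $y\in V$ with $g(y)\neq 0$. By complete $F$-regularity applied to the point $y$ and the closed set $X\setminus V$ (which misses $y$), there exists $h\in B(X,F)$ with $h(y)=1$ and $h\equiv 0$ on $X\setminus V$. Since $V\subseteq Z(f)$, a pointwise check gives $fh=0$, while $(gh)(y)=g(y)\cdot 1\neq 0$ shows $gh\neq 0$. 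Because $h$ is bounded, $h\in B(X,F)\subseteq A(X,F)$, and it witnesses $h\in Ann(f)\setminus Ann(g)$, completing the contrapositive.

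The only subtlety to watch for is that $x_0\notin int_X Z(g)$ does \emph{not} mean $g(x_0)\neq 0$; one only knows that every neighbourhood of $x_0$ meets $X\setminus Z(g)$. That is why the separating function must be pinned to a nearby point $y$ rather than to $x_0$ itself. Beyond this, the argument is routine and, as the authors indicate, mirrors the real-valued proof in \cite{AKA1999}; the new ingredient is simply the observation that complete $F$-regularity still produces a \emph{bounded} separating function, which then lives in $B(X,F)$ and hence in every intermediate ring $A(X,F)$.
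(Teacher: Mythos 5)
Your proof is correct and is essentially the argument the paper has in mind: the paper does not write out a proof but refers to the technique of Lemma 2.1 of \cite{AKA1999} together with the observation that the separating function produced by complete $F$-regularity is bounded and hence lies in every intermediate ring, which is exactly the structure of your contrapositive argument. Your side remark that Theorem \ref{1} is not needed for this statement itself is also accurate.
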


We write the following formula for the basic $z^\circ$-ideals in $A(X,F)$, which is obtained by combining theorem \ref{1} and Theorem \ref{2} and which we will need to use several times in the present paper.

\begin{theorem}\label{3}
For $f\in A(X,F), P_f=\{ g\in A(X,F): int_X Z(f)\subseteq int_X Z(g)\}$.
\end{theorem}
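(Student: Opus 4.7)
The plan is to obtain Theorem \ref{3} as a direct consequence of the two preceding results, with no independent work required. First I would record that any intermediate ring $A(X,F)$ is reduced: this is pointed out explicitly in the introduction (it is a subring of the reduced ring $C(X,F)$, since a continuous function whose square is identically zero must itself be zero). Consequently Theorem \ref{1}, which is stated for an arbitrary reduced commutative ring with unity, applies verbatim with $R = A(X,F)$. This yields
\[
P_f \;=\; \{\, g \in A(X,F) : Ann(f) \subseteq Ann(g)\,\},
\]
where both annihilators are computed inside $A(X,F)$.

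Next I would invoke Theorem \ref{2}, which translates the algebraic condition $Ann(f) \subseteq Ann(g)$ into the topological condition $int_X Z(f) \subseteq int_X Z(g)$ for any pair $f, g \in A(X,F)$. Substituting this equivalence into the displayed expression for $P_f$ immediately produces
\[
P_f \;=\; \{\, g \in A(X,F) : int_X Z(f) \subseteq int_X Z(g)\,\},
\]
which is the claimed formula.

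The only point requiring a moment of care — and hence the nearest thing to an obstacle — is to ensure that the two occurrences of the annihilator refer to the same object. In Theorem \ref{1} the annihilator is taken inside the ring $R$ under consideration, and in Theorem \ref{2} it is taken inside $A(X,F)$; since we are applying Theorem \ref{1} with $R = A(X,F)$, the two notions coincide, and the substitution is legitimate. Apart from this bookkeeping, the proof is a one-line concatenation of Theorems \ref{1} and \ref{2}.
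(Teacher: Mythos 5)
Your proof is correct and matches the paper exactly: the paper itself states that Theorem \ref{3} is ``obtained by combining Theorem \ref{1} and Theorem \ref{2}'', which is precisely your concatenation, and your extra remark that the annihilators in both theorems are computed in the same ring $A(X,F)$ (which is reduced) is the only point of care needed.
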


On using this theorem, we obtain the following new topological characherization of $z^\circ$-ideals in the intermediate rings.

\begin{theorem}\label{4}
For any ideal $\mathcal{P}$ of closed sets in $X$, $C_{\mathcal{P}}(X,F)\cap A(X,F)\equiv \{f\in A(X,F): cl_X(X\setminus Z(f))\in \mathcal{P}\}$ is a $z^\circ$-ideal of $A(X,F)$.

Conversely, if $I$ is a $z^\circ$-ideal of $A(X,F)$, then there exists an ideal $\mathcal{P}[I]$ of closed sets in $X$ such that $I= C_{\mathcal{P}[I]}(X,F)\cap A(X,F)$.
\end{theorem}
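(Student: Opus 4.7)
The plan is to use Theorem \ref{3} as the principal bridge: it translates the algebraic condition $g \in P_f$ into $int_X Z(f) \subseteq int_X Z(g)$, and then (by complementation) into $cl_X(X\setminus Z(g)) \subseteq cl_X(X\setminus Z(f))$, so the entire theorem reduces to matching closures of cozero-sets against the ideal-of-closed-sets axioms on $\mathcal{P}$.

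For the forward direction, I would set $J = C_{\mathcal{P}}(X,F) \cap A(X,F)$ and first verify $J$ is an ideal. Closure under products follows from $X \setminus Z(fh) \subseteq X \setminus Z(f)$ combined with the hereditary-closed-subset axiom on $\mathcal{P}$, and closure under sums from $X \setminus Z(f_1 + f_2) \subseteq (X \setminus Z(f_1)) \cup (X \setminus Z(f_2))$ combined with the finite-union axiom. To see $J$ is a $z^\circ$-ideal, take $f \in J$ and $g \in P_f$; Theorem \ref{3} gives $int_X Z(f) \subseteq int_X Z(g)$, whence $cl_X(X\setminus Z(g)) \subseteq cl_X(X\setminus Z(f)) \in \mathcal{P}$, and the hereditary axiom places $cl_X(X\setminus Z(g))$ in $\mathcal{P}$.

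For the converse, given a $z^\circ$-ideal $I$ of $A(X,F)$, I would define
\[
\mathcal{P}[I] = \{K \subseteq X \text{ closed} : K \subseteq cl_X(X \setminus Z(f)) \text{ for some } f \in I\}.
\]
Hereditariness is immediate. For closure under binary union, given $K_i \subseteq cl_X(X\setminus Z(f_i))$ with $f_i \in I$, I would use that $f_1^2 + f_2^2 \in I$ together with $Z(f_1^2 + f_2^2) = Z(f_1)\cap Z(f_2)$ (which holds because $F$ is totally ordered), yielding $cl_X(X\setminus Z(f_1^2+f_2^2)) = cl_X(X\setminus Z(f_1)) \cup cl_X(X\setminus Z(f_2)) \supseteq K_1 \cup K_2$. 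The inclusion $I \subseteq C_{\mathcal{P}[I]}(X,F)\cap A(X,F)$ is trivial, since each $f \in I$ is its own witness. For the reverse inclusion, if $f \in A(X,F)$ with $cl_X(X\setminus Z(f)) \in \mathcal{P}[I]$, pick a witnessing $g \in I$ with $cl_X(X\setminus Z(f)) \subseteq cl_X(X\setminus Z(g))$; complementing gives $int_X Z(g) \subseteq int_X Z(f)$, and Theorem \ref{3} then forces $f \in P_g \subseteq I$ by the $z^\circ$-ideal property of $I$.

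The only delicate step is the sum-of-squares device used to check closure of $\mathcal{P}[I]$ under finite unions; this is the one place where total ordering of $F$ enters crucially, ensuring $f_1^2+f_2^2$ vanishes exactly on $Z(f_1)\cap Z(f_2)$. Everything else is a chain of complementations layered on Theorem \ref{3}.
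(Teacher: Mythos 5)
Your proof is correct and follows essentially the same route as the paper: Theorem \ref{3} as the bridge between $P_f$ and inclusions of $\operatorname{int}_X Z(\cdot)$, and $\mathcal{P}[I]$ defined as the ideal of closed sets generated by $\{cl_X(X\setminus Z(f)) : f\in I\}$. You are in fact slightly more complete, since you verify the ideal axioms for $C_{\mathcal{P}}(X,F)\cap A(X,F)$ and justify closure of the generating family under finite unions via $f_1^2+f_2^2$, two points the paper asserts without proof.
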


\begin{proof}
Let $f\in C_{\mathcal{P}}(X,F)\cap A(X,F)$. We shall show that $P_f(\equiv $ the intersection of all minimal prime ideals of $A(X,F)$ which contain $f)\subseteq C_\mathcal{P}(X,F)$. Choose $g\in P_f$. Then by Theorem \ref{3}, $int_X Z(f)\subseteq int_X Z(g)$. This implies that $cl_X(X\setminus Z(g))\subseteq cl_X (X\setminus Z(f))$. Since $f\in C_\mathcal{P}(X,F)$ and $\mathcal{P}$ is an ideal of closed sets in $X$, it follows that $cl_X(X\setminus Z(g))\in \mathcal{P}$ i.e. $g\in C_\mathcal{P}(X,F)$.

Conversely, let $I$ be a $z^\circ$-ideal of $A(X,F)$. Then the family $T=\{ cl_X(X\setminus Z(f)): f\in I\}$ is closed under finite union. Suppose $\mathcal{P}[I]$ is the ideal of closed sets in $X$, generated by this family i.e. $\mathcal{P}[I]=\{ K: K \textit{ is closed in } X \textit{ and }K\subseteq \textit{ some member of } T\}$. We assert that $I=C_{\mathcal{P}[I]}(X,F)\cap A(X,F)$. It is plain that $I\subseteq C_{\mathcal{P}[I]}(X,F)\cap A(X,F)$. To prove the other containment let $g\in C_{\mathcal{P}[I]}(X,F)\cap A(X,F)$. Then  there exists $f\in I$ such that $cl_X(X\setminus Z(g))\subseteq cl_X(X\setminus Z(f))$. It follows that $int_X Z(g)\supseteq int_X Z(f)$ i.e. $g\in P_f$ in the ring $A(X,F)$. As $f\in I$ and $I$ is a $z^\circ$-ideal, this implies that $g\in I$. Thus $C_{\mathcal{P}[I]}(X,F)\cap A(X,F)\subseteq I$.
\end{proof}

For more information on ideal of closed sets in $X$, see \cite{AG2010} and \cite{AG2012}.
The following proposition is a consequence of Theorem \ref{4} and gives an alternative approach to prove a similar kind of theorem in [\cite{AKA1999} Theorem 2.5].

\begin{theorem}\label{35}
An ideal $I$ in $A(X,F)$ containing divisors of zero only can be extended to a proper $z^\circ$-ideal in this ring.
\end{theorem}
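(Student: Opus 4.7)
The plan is to invoke the converse direction of Theorem \ref{4}, but applied to a constructed ideal of closed sets rather than to an already-$z^\circ$ ideal. Specifically, I would attach to $I$ the family
\[
T = \{ \cl_X(X\setminus Z(f)) : f \in I\},
\]
and take $\mathcal{P}[I]$ to be the downward closure of $T$ among closed sets in $X$. The candidate extension is then $J := C_{\mathcal{P}[I]}(X,F)\cap A(X,F)$, which is a $z^\circ$-ideal of $A(X,F)$ by the forward direction of Theorem \ref{4}, and clearly contains $I$.

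Two things must then be verified. First, that $T$ is closed under finite unions (so $\mathcal{P}[I]$ is genuinely an ideal of closed sets): given $f_1,f_2 \in I$, the element $f_1^2+f_2^2$ lies in $I$ because $I$ is an ideal, and since $F$ is totally ordered, a sum of squares vanishes only where each summand vanishes, so $Z(f_1^2+f_2^2)=Z(f_1)\cap Z(f_2)$. Taking complements and closures then gives $\cl_X(X\setminus Z(f_1^2+f_2^2))=\cl_X(X\setminus Z(f_1))\cup \cl_X(X\setminus Z(f_2))$, which shows that the union of two members of $T$ lies in $T$.

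Second, and this is the delicate point where the zero-divisor hypothesis is used, one must show $X\notin \mathcal{P}[I]$, for otherwise $J$ would contain the constant function $1$ and fail to be proper. Since $A(X,F)$ is reduced, each $f\in I$ being a zero divisor means there exists $g\in A(X,F)$ with $g\neq 0$ and $fg=0$. Then $X\setminus Z(g)$ is a nonempty open set contained in $Z(f)$, so $\mathrm{int}_X Z(f)\neq\emptyset$, which is exactly the statement $\cl_X(X\setminus Z(f))\subsetneq X$. Hence no member of $T$ equals $X$, and since every element of $\mathcal{P}[I]$ sits inside some member of $T$, we conclude $X\notin \mathcal{P}[I]$, so $1\notin J$, and $J$ is a proper $z^\circ$-ideal of $A(X,F)$ extending $I$.

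The main obstacle is precisely the properness check in the last step: it is the only place where the zero-divisor hypothesis on $I$ enters, and without it the argument collapses (e.g. a unit in $I$ would force $\cl_X(X\setminus Z(f))=X$). Everything else is a routine application of Theorem \ref{4} once the sum-of-squares trick is noted.
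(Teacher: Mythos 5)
Your proof is correct and follows essentially the same route as the paper: form the ideal of closed sets generated by $\{cl_X(X\setminus Z(f)): f\in I\}$, apply the forward half of Theorem \ref{4} to get a $z^\circ$-ideal containing $I$, and use the zero-divisor hypothesis to see that $X$ is excluded from this ideal of closed sets, whence the resulting $z^\circ$-ideal is proper. The only difference is that you spell out two details the paper leaves implicit, namely the sum-of-squares argument showing the family is closed under finite unions and the verification that a zero divisor in $A(X,F)$ has a zero set with non-empty interior.
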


\begin{proof}
It is easy to check that an $f\in C(X,F)$ is a divisor of zero in the ring $C(X,F)$ if and only if $int_X Z(f)\neq \phi$. Since an $f\in A(X,F)$ is a divisor of zero in $A(X,F)$ if and only if it is a divisor of zero in $C(X,F)$, as can be immediately verified, it follows that an $f\in A(X,F)$ is a divisor of zero in $A(X,F)$ if and only if $int_X Z(f)\neq \phi$. Therefore for each $f\in I, f\neq 0, int_X Z(f)$ is a non-empty proper open subset of $X$. Consequently $cl_X (X\setminus Z(f))=X\setminus int_X Z(f)$ is a non-empty proper closed subset of $X$. Accordingly $\{ cl_X (X\setminus Z(f)): f\in I\}$ is a family of closed sets in $X$, which is closed under finite unions and which excludes the set $X$. Let $\mathcal{P}[I]$ be the ideal of closed sets in $X$, generated by the last family. Then it is clear that $I\subseteq C_{\mathcal{P}[I]}(X,F)\cap A(X,F)$. We further note that, since $X\notin \mathcal{P}[I]$, this implies on using theorem \ref{4} that $C_{\mathcal{P}[I]}(X,F)\cap A(X,F)$ is a proper $z^\circ$-ideal of $A(X,F)$.
\end{proof}

Remark: A special case of this theorem on choosing $F=\mathbb{R}$ and $A(X,F)=C(X)$ was establish in [\cite{AKA1999} Theorem 2.5] where the authors have offered an existential proof of it by using the principle of transfinite induction. We would like to mention that the proof of the theorem \ref{35} above is rather constructive.

To show that an intermediate ring, properly contained in $C(X,F)$ is never regular, we require the following auxiliary result.

\begin{lemma}\label{5}
 An intermediate ring $A(X,F)$ is an absolutely convex subring of $C(X,F)$ in  the sense that if $\lvert f\rvert \leq \lvert g\rvert, f\in C(X,F)$ and $g\in A(X,F)$, then $f\in A(X,F)$. 
\end{lemma}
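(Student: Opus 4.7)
The plan is to realize $f$ as a product of an element of the ring $A(X,F)$ and a bounded continuous function, using that $B(X,F)\subseteq A(X,F)$. More precisely, I would factor $f=(1+g^{2})\cdot h$, where $h:=f\cdot(1+g^{2})^{-1}$, and verify that each factor lies in $A(X,F)$.

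First, since $A(X,F)$ is a subring of $C(X,F)$ containing the constant function $1$, the function $1+g^{2}$ lies in $A(X,F)$. Because squares are nonnegative in the totally ordered field $F$, one has $1+g(x)^{2}\geq 1>0$ for every $x\in X$, so reciprocation (which is continuous on $F\setminus\{0\}$ in the order topology of any totally ordered field) yields $(1+g^{2})^{-1}\in C(X,F)$. Hence $h=f\cdot(1+g^{2})^{-1}$ is a well-defined member of $C(X,F)$.

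The next step is the boundedness of $h$. The elementary inequality $(y-1)^{2}\geq 0$, valid in any ordered field, gives $1+y^{2}\geq 2y$ for every $y\geq 0$. Taking $y=|g(x)|$ at each point $x\in X$ produces $|g(x)|/(1+g(x)^{2})\leq 1/2$; note that $1/2\in F$ since $\mathbb{Q}$ embeds in any ordered field. Combining this with the hypothesis $|f|\leq |g|$ gives $|h(x)|\leq 1/2$ for all $x\in X$, so $h\in B(X,F)\subseteq A(X,F)$. Therefore $f=(1+g^{2})\cdot h$ is a product of two elements of $A(X,F)$, whence $f\in A(X,F)$.

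If any step deserves to be singled out as ``main,'' it is the boundedness estimate for $h$, but the argument is essentially algebraic and presents no real obstacle. The only point requiring a little care is that the proof must go through in an arbitrary totally ordered field rather than $\mathbb{R}$; this is covered by the facts that squares are nonnegative in $F$, that $\mathbb{Q}\hookrightarrow F$ so that $1/2$ is a positive element, and that reciprocation is continuous on $F\setminus\{0\}$.
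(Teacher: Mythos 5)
Your proof is correct, and it follows the same basic strategy as the paper: write $f$ as the product of a bounded continuous function (hence a member of $B(X,F)\subseteq A(X,F)$) and an element of $A(X,F)$. The only difference is the choice of denominator. The paper factors $f=\frac{f}{1+\lvert g\rvert}\,(1+\lvert g\rvert)$, noting $\bigl\lvert\frac{f}{1+\lvert g\rvert}\bigr\rvert\leq 1$, but this requires knowing that $\lvert g\rvert\in A(X,F)$, which the paper justifies only by asserting that $A(X,F)$ is a lattice-ordered ring (``easily verifiable''). Your factorization $f=(1+g^{2})\cdot\frac{f}{1+g^{2}}$, with the estimate $\lvert f\rvert/(1+g^{2})\leq\lvert g\rvert/(1+g^{2})\leq 1/2$, puts $1+g^{2}$ in $A(X,F)$ for purely ring-theoretic reasons, so you sidestep the lattice claim entirely; the price is the slightly less immediate bound via $(\,\lvert g\rvert-1)^{2}\geq 0$, which you handle correctly, together with the (correct) observations that $1/2\in F$ and that inversion is continuous on $F\setminus\{0\}$. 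This is a modest but genuine simplification of the hypotheses actually used.
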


\begin{proof}
As $A(X,F)$ is a lattice ordered ring, easily verifiable, and $\frac{f}{1+\lvert g\rvert}\in B(X,F)$, it follows that $f=\frac{f}{1+\lvert g\rvert}(1+\lvert g\rvert)$ belongs to $A(X,F)$.
\end{proof}

\begin{theorem}\label{6}
Suppose an intermediate ring $A(X,F)$ is regular. Then $A(X,F)=C(X,F)$.
\end{theorem}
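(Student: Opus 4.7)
\medskip
\noindent\textbf{Proof plan.}
The plan is to argue by contradiction. Assume $A(X,F)$ is von Neumann regular but there exists some $h\in C(X,F)\setminus A(X,F)$. Since $A(X,F)$ is a lattice-ordered ring, $|h|\notin A(X,F)$ either (if $|h|\in A(X,F)$, Lemma \ref{5} applied with $g=|h|$ and $f=h$ forces $h\in A(X,F)$). The goal is to manufacture from $h$ a concrete element of $A(X,F)$ which, by the hypothesis of regularity, ought to be a unit, and then to read back that $|h|\in A(X,F)$, producing the contradiction.

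The natural candidate is $f=\dfrac{1}{1+|h|}$. Since $1+|h|$ is continuous, strictly positive and unbounded below $1$, $f$ lies in $B(X,F)\subseteq A(X,F)$, and $Z(f)=\varnothing$, so $\mathrm{int}_X Z(f)=\varnothing$. By Theorem \ref{2}, $\mathrm{Ann}_{A(X,F)}(f)=(0)$, i.e.\ $f$ is not a zero divisor in $A(X,F)$.

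Now I invoke $z^\circ$-ideals. First, every ideal of a commutative von Neumann regular ring is a $z^\circ$-ideal: for $a\in R$, write $a=a^2c$ so that $e=ac$ is an idempotent with $aR=eR$ and $\mathrm{Ann}(a)=(1-e)R$; then by Theorem \ref{1}, $P_a=\{b:\mathrm{Ann}(a)\subseteq\mathrm{Ann}(b)\}=eR=aR$, so every ideal $I\ni a$ satisfies $P_a\subseteq I$. Second, as recorded in the paragraph after Theorem 1.1, in a reduced ring every nonzero element of a proper $z^\circ$-ideal is a zero divisor. Since $A(X,F)$ is reduced and regular, every proper ideal of $A(X,F)$ is therefore a $z^\circ$-ideal and consists entirely of zero divisors. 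Because $f$ is not a zero divisor, the principal ideal $(f)$ cannot be proper; hence $f$ is a unit of $A(X,F)$, which gives $\dfrac{1}{f}=1+|h|\in A(X,F)$ and so $|h|\in A(X,F)$, contradicting what we observed in the first paragraph.

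The only delicate point—the main obstacle—is securing that $f$ is forced to be a unit in $A(X,F)$; one must be careful to use that regularity of a reduced commutative ring makes every non-zero-divisor a unit. Using $z^\circ$-ideal language as above is in keeping with the paper's perspective, but the same conclusion follows directly from $f=f^2b$ together with $f$ being a non-zero-divisor, yielding $fb=1$. Either route completes the argument.
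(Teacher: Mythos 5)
Your proposal is correct and follows essentially the same route as the paper: both arguments reduce the claim to showing that $\frac{1}{1+|h|}$ is a unit of $A(X,F)$ via the absolute convexity of Lemma \ref{5}, and both derive this from the facts that in a regular ring every proper ideal is a $z^\circ$-ideal, that every member of a proper $z^\circ$-ideal in a reduced ring is a zero divisor, and that $Z\bigl(\frac{1}{1+|h|}\bigr)=\varnothing$ rules this out. Your write-up merely supplies the details the paper leaves implicit (the idempotent computation showing $P_a=aR$ in a regular ring, and the even shorter route $f=f^2b\Rightarrow fb=1$ for a non-zero-divisor).
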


\begin{proof}
Choose $f$ in $C(X,F)$. To show that $f\in A(X,F)$, it suffices to check in view of the absolute convexity of $A(X,F)$ in Lemma \ref{5}, that $\frac{1}{1+\lvert f\rvert}$ is a multiplicative unit of the ring $A(X,F)$. If possible let $\frac{1}{1+\lvert f\rvert}$ be not a unit in $A(X,F)$. Then the principal ideal $(\frac{1}{1+\lvert f\rvert})$ in $A(X,F)$ is a proper one. Since $A(X,F)$ is assumed to be regular and each proper ideal of a regular ring can be easily proved to be a $z^\circ$-ideal, it follows that the principal ideal $(\frac{1}{1+\lvert f\rvert})$ is a $z^\circ$-ideal. But we observe that $Z(\frac{1}{1+\lvert f\rvert})$ is an empty set and therefore $\frac{1}{1+\lvert f\rvert}$ is not a divisor of zero in $A(X,F)$. Since each member of a proper $z^\circ$-ideal is necessarily a divisor of zero, we arrive at contradiction.
\end{proof}

Given a totally ordered field $F$, The following proposition decides the class of spaces $X$ for which $C(X,F)$ is a regular ring.

\begin{theorem}\label{7}
The following statements are equivalent:
\begin{itemize}
\item[1)] $C(X,F)$ is a regular ring.
\item[2)] $X$ is a $P_F$-space.
\item[3)] Every ideal of $C(X,F)$ is a $z^\circ$-ideal.
\end{itemize}
\end{theorem}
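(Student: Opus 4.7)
The plan is that $(1)\Leftrightarrow(2)$ is immediate from the paper's definition of a $P_F$-space (which is precisely that $C(X,F)$ is von Neumann regular), so the substantive work lies in $(1)\Leftrightarrow(3)$. Both directions will rest on the annihilator description of $P_a$ supplied by Theorem \ref{1}, together with the already recorded fact that $C(X,F)$ is reduced.

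For $(1)\Rightarrow(3)$ I would take an arbitrary proper ideal $I$ of $C(X,F)$ and $f\in I$, and show $P_f\subseteq I$. Fix $g\in P_f$, so $Ann(f)\subseteq Ann(g)$ by Theorem \ref{1}. Regularity furnishes $h\in C(X,F)$ with $f=f^2 h$, so $f(1-fh)=0$; that is, $1-fh\in Ann(f)\subseteq Ann(g)$. Multiplying by $g$ gives $g=fhg\in I$, as required.

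For $(3)\Rightarrow(1)$, fix $f\in C(X,F)$ and look at the principal ideal $(f^2)$. If $(f^2)=C(X,F)$ then $f$ is a unit and regularity at $f$ is immediate. Otherwise $(f^2)$ is a proper ideal, hence a $z^\circ$-ideal by hypothesis. Because $C(X,F)$ is reduced we have $Ann(f^2)=Ann(f)$, so Theorem \ref{1} gives $P_{f^2}=P_f$; since $f\in P_f$ trivially (as $Ann(f)\subseteq Ann(f)$), we obtain $f\in P_{f^2}\subseteq (f^2)$, which means $f=f^2 h$ for some $h$. This is precisely von Neumann regularity at $f$.

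I do not anticipate any serious obstacle: the argument is a short algebraic manipulation once Theorem \ref{1} is invoked. The only delicate point is the paper's convention that a $z^\circ$-ideal is by definition proper, which is why the unit case needs to be segregated in $(3)\Rightarrow(1)$; without this bookkeeping the passage from ``$(f^2)$ is a $z^\circ$-ideal'' to ``$f\in(f^2)$'' would not be legitimate when $f$ happens to be invertible.
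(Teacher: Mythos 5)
Your argument is correct, but it takes a genuinely different route from the paper's. You treat $(1)\Leftrightarrow(2)$ as definitional and then prove $(1)\Leftrightarrow(3)$ by a self-contained algebraic computation with annihilators: regularity gives $1-fh\in Ann(f)\subseteq Ann(g)$ and hence $g=fhg\in I$ for the forward direction, and $Ann(f^2)=Ann(f)$ in a reduced ring gives $f\in P_{f^2}\subseteq (f^2)$ for the converse. The paper instead leans on external results throughout: it cites Theorem 3.2 of the reference [3] (Acharyya--Chattopadhyay--Ghosh) for $(1)\Leftrightarrow(2)$, asserts $(1)\Rightarrow(3)$ as the known fact that every ideal of a regular ring is a $z^\circ$-ideal (the printed word ``reduced'' there is evidently a slip for ``regular''), and closes the loop $(3)\Rightarrow(2)$ by passing through $z$-ideals -- every $z^\circ$-ideal of a semisimple ring is a $z$-ideal -- and then invoking the same cited theorem, which characterizes $P_F$-spaces via $z$-ideals. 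What your route buys is self-containment and generality: your $(1)\Leftrightarrow(3)$ is a statement about arbitrary reduced commutative rings and in effect supplies the proof of the fact the paper merely quotes; your handling of the unit case is also the right bookkeeping under the paper's convention that $z^\circ$-ideals are proper. What the paper's route buys is brevity and an explicit link to the topological characterization of $P_F$-spaces in the literature, at the cost of making the equivalence of (1) and (3) depend on results about $C(X,F)$ rather than on pure ring theory.
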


\begin{proof}
Equivalence of $(1)$ and $(2)$ is already proved in \cite{AKPG2004}, Theorem 3.2. Since each ideal in a reduced ring is a $z^\circ$-ideal, the implication $(1)\Rightarrow (3)$ is immediate. Assume therefore that $(3)$ is true. This implies clearly that each ideal of $C(X,F)$ is a $z$-ideal, we use the fact that each $z^\circ$-ideal in a semisimple ring is a $z$-ideal. Consequently by Theorem 3.2 in \cite{AKPG2004}, $X$ becomes a $P_F$-space.
\end{proof}

\begin{theorem}\label{8}
Let $X$ be a $P_F$-space. Then for an intermediate ring $A(X,F)$, the following two statements are equivalent:

\begin{itemize}
\item[1)] $A(X,F)=C(X,F)$.
\item[2)] Each ideal of $A(X,F)$ is a $z^\circ$-ideal.
\end{itemize}
\end{theorem}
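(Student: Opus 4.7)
The plan is to handle the two implications separately, with $(1)\Rightarrow(2)$ being immediate and $(2)\Rightarrow(1)$ essentially recycling the argument of Theorem \ref{6}.

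For $(1)\Rightarrow(2)$: once $A(X,F)=C(X,F)$, the hypothesis that $X$ is a $P_F$-space lets us invoke the equivalence $(2)\Leftrightarrow(3)$ in Theorem \ref{7}, so every ideal of $C(X,F)=A(X,F)$ is a $z^\circ$-ideal. This direction requires no further work.

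For $(2)\Rightarrow(1)$: I would pick an arbitrary $f\in C(X,F)$ and aim to show $f\in A(X,F)$. The obvious test element is $u=\frac{1}{1+\lvert f\rvert}$, which lies in $B(X,F)\subseteq A(X,F)$. The strategy is to prove $u$ is a unit in $A(X,F)$; then $1+\lvert f\rvert = u^{-1}\in A(X,F)$, so $\lvert f\rvert\in A(X,F)$, and the absolute convexity established in Lemma \ref{5} forces $f\in A(X,F)$ (since $\lvert f\rvert\le\lvert\,\lvert f\rvert\,\rvert$ with $\lvert f\rvert\in A(X,F)$).

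The key step is ruling out the possibility that $u$ is a non-unit. If it were, the principal ideal $(u)$ in $A(X,F)$ would be proper, hence by hypothesis $(2)$ a $z^\circ$-ideal. As observed in the introduction, every element of a proper $z^\circ$-ideal in a reduced ring is a zero divisor; and in $A(X,F)$ the zero-divisor criterion (used in the proof of Theorem \ref{35}) says $g$ is a divisor of zero iff $\operatorname{int}_X Z(g)\neq\emptyset$. But $Z(u)=\emptyset$, so $u$ is not a zero divisor, producing the desired contradiction.

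The only subtlety—and the main obstacle—is making sure the ingredients used in Theorem \ref{6} (absolute convexity, the characterization of divisors of zero via $\operatorname{int}_X Z(\cdot)$, and the fact that members of a proper $z^\circ$-ideal must be zero divisors) are still available here; but all three are already established in the paper, so nothing new is required. Note that the $P_F$ hypothesis is not actually used in this direction, consistent with the fact that $(2)\Rightarrow(1)$ has the same flavor as Theorem \ref{6}.
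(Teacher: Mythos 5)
Your proposal is correct and follows exactly the route the paper intends: the paper's proof is the one-line remark ``Follows from Theorem \ref{7} and a close look into the proof of Theorem \ref{6},'' and your write-up simply makes that explicit --- $(1)\Rightarrow(2)$ via the equivalence $(2)\Leftrightarrow(3)$ of Theorem \ref{7}, and $(2)\Rightarrow(1)$ by rerunning the argument of Theorem \ref{6} with hypothesis $(2)$ substituting for regularity as the source of the claim that the proper principal ideal $\bigl(\frac{1}{1+\lvert f\rvert}\bigr)$ is a $z^\circ$-ideal. Your side observation that the $P_F$ hypothesis is only needed for the $(1)\Rightarrow(2)$ direction is also accurate.
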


\begin{proof}
Follows from Theorem \ref{7} and a close look into the proof of Theorem \ref{6}.
\end{proof}

The following new result is a special case of the theorem \ref{8} with the choise $F=\mathbb{R}$.

\begin{corollary}
	With in the class of $P$-spaces $X$, an intermediate ring $A(X)$ containing $C^*(X)$ and contained in $C(X)$ is regular if and only if each ideal of $A(X)$ is a $z^\circ$-ideal.
\end{corollary}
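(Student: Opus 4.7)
The plan is to deduce this corollary directly from Theorem \ref{8} combined with Theorems \ref{6} and \ref{7}, using $F=\mathbb{R}$ so that $P_F$-spaces are exactly $P$-spaces and $B(X,F)=C^*(X)$. The corollary asserts an equivalence, so I will handle the two implications separately, pivoting through the condition $A(X)=C(X)$.

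For the forward direction, I would assume that $A(X)$ is regular. By Theorem \ref{6} (applied with $F=\mathbb{R}$), regularity of an intermediate ring forces $A(X)=C(X)$. Now since $X$ is a $P$-space, Theorem \ref{7} tells me that every ideal of $C(X)$ is a $z^\circ$-ideal, which is the desired conclusion for $A(X)=C(X)$.

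For the reverse direction, I would suppose that every ideal of $A(X)$ is a $z^\circ$-ideal. Since $X$ is a $P$-space, i.e.\ a $P_F$-space for $F=\mathbb{R}$, Theorem \ref{8} directly yields $A(X)=C(X)$. Invoking Theorem \ref{7} once more, $C(X)$ is regular, so $A(X)$ is regular.

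Both steps are essentially bookkeeping: the substantive content is already packaged into Theorems \ref{6}, \ref{7}, and \ref{8}, and nothing needs to be reproved. There is no real obstacle here; the only thing to be careful about is to explicitly observe that $P_F$-space with $F=\mathbb{R}$ coincides with the classical notion of $P$-space and that $B(X,\mathbb{R})=C^*(X)$, so that the hypothesis ``$A(X)$ contains $C^*(X)$ and is contained in $C(X)$'' really is the $F=\mathbb{R}$ instance of ``$A(X,F)$ is an intermediate ring.'' Once this translation is noted, the corollary is an immediate consequence of the preceding theorems.
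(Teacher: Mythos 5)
Your proposal is correct and follows exactly the route the paper intends: the paper states this corollary as the $F=\mathbb{R}$ instance of Theorem \ref{8}, and your two implications (pivoting through $A(X)=C(X)$ via Theorems \ref{6} and \ref{8}, with Theorem \ref{7} supplying the equivalence between regularity of $C(X)$ and the $P$-space/$z^\circ$-ideal conditions) are precisely the bookkeeping that deduction requires.
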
	

Let us designate for our convenience the aggregate of all totally ordered fields $F$, which satisfy the following two conditions, by the notation $\mathcal{F}$:

\begin{itemize}
\item[a)] cf(F)$\equiv $ the co-finality index of $F$, is $\omega_\circ$, which means that there is an order isomorphic copy of the ordinal number $\omega_\circ$, which is co-finally embedded in $F$.

\item[b)] $F$ is cauchy complete in the sense that every Cauchy sequence in $F$ is convergent in $F$, with respect to the order topology in $F$.
\end{itemize}

\begin{theorem}\label{9}
Let $F$ be a totally ordered field belonging to the family $\mathcal{F}$. Then for a space $X$, the following two statements are equivalent:

\begin{itemize}
\item[1)] $X$ is an almost $P_F$-space.
\item[2)] $X$ is an almost $P$-space.
\end{itemize}
\end{theorem}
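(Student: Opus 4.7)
The plan is to handle the two directions of the equivalence separately, exploiting the cofinality hypothesis in both directions and splitting the harder direction according to whether $F=\mathbb R$ or $F\neq\mathbb R$.

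For the easier direction $(2)\Rightarrow(1)$, the idea is to realise every zero set of an $F$-valued continuous function as a $G_\delta$ in $X$, after which the equivalent formulation of almost $P$-ness via $G_\delta$'s (recalled in the introduction) finishes the job. Since $\operatorname{cf}(F)=\omega_\circ$, I can fix a sequence $(\lambda_n)$ in $F$ with $\lambda_n>0$ and $\lambda_n\to 0$ in the order topology on $F$. Then for any $f\in C(X,F)$,
\[
Z(f)=\bigcap_{n\in\mathbb N}\{x\in X:|f(x)|<\lambda_n\},
\]
which is a $G_\delta$ subset of $X$. If $X$ is almost $P$, any non-empty such set has non-empty interior, so $X$ is almost $P_F$.

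For $(1)\Rightarrow(2)$, the case $F=\mathbb R$ is tautological, so assume $F\neq\mathbb R$; then $X$ is zero-dimensional. Given $g\in C(X,\mathbb R)$ with $Z(g)\neq\emptyset$, pick $x\in Z(g)$. For each $n\geq 1$, $g^{-1}(-1/n,1/n)$ is an open neighbourhood of $x$; using zero-dimensionality I can recursively choose clopen sets $V_0=X\supseteq V_1\supseteq V_2\supseteq\cdots$ with $x\in V_n$ and $V_n\subseteq g^{-1}(-1/n,1/n)$ for $n\geq 1$. Reusing the decreasing null sequence $(\lambda_n)$ in $F$, I define $h\colon X\to F$ by $h(y)=\lambda_n$ if $y\in V_n\setminus V_{n+1}$ and $h(y)=0$ if $y\in\bigcap_n V_n$. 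On each set $V_n\setminus V_{n+1}$, which is clopen, $h$ is locally constant. At a point $y_0\in\bigcap_n V_n$ and an $\varepsilon>0$ in $F$, pick $N$ with $\lambda_N<\varepsilon$; for $y\in V_N$ the chain $V_0\supseteq\cdots\supseteq V_N$ forces either $h(y)=0$ or $h(y)=\lambda_n$ with $n\geq N$, hence $|h(y)|<\varepsilon$. Thus $h\in C(X,F)$, and by construction $Z(h)=\bigcap_n V_n\subseteq Z(g)$ contains $x$.

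Applying the almost $P_F$ hypothesis to $h$, $Z(h)$ has non-empty interior in $X$; since $Z(h)\subseteq Z(g)$, the zero set $Z(g)$ also has non-empty interior, proving that $X$ is almost $P$. The main technical obstacle is the construction of the $F$-valued $h$ and the verification of its continuity at points of $\bigcap_n V_n$; the piecewise-constant definition bypasses any need to sum series in $F$, so Cauchy completeness of $F$ is not actually invoked in this argument, only $\operatorname{cf}(F)=\omega_\circ$ (to obtain a decreasing null sequence) and the zero-dimensionality of $X$ implied by $F\neq\mathbb R$.
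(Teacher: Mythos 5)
Your proof is correct, and the harder direction takes a genuinely different route from the paper's. For $(2)\Rightarrow(1)$ you are essentially re-deriving one half of the paper's Lemma \ref{10} inline: $\mathrm{cf}(F)=\omega_\circ$ yields a positive null sequence $(\lambda_n)$, every $F$-zero set becomes a $G_\delta$, and the $G_\delta$ formulation of almost $P$ finishes it — same idea as the paper. For $(1)\Rightarrow(2)$ the paper instead invokes Lemma \ref{11} (from \cite{AKPG2004}): given a point $p$ of a non-empty $G_\delta$ set $G$, it interpolates an $F$-zero set $Z$ with $p\in Z\subseteq G$ and applies the almost $P_F$ hypothesis. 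You avoid that lemma entirely by splitting on $F=\mathbb{R}$ (tautological) versus $F\neq\mathbb{R}$, in which case complete $F$-regularity forces $X$ to be zero-dimensional, and you build the interpolating $F$-valued function $h$ by hand: locally constant off $\bigcap_n V_n$ with values running down a positive null sequence. The continuity check at points of $\bigcap_n V_n$ is exactly right, provided you really do arrange $(\lambda_n)$ to be decreasing (monotonicity is what makes ``$n\geq N$ implies $\lambda_n<\varepsilon$'' work; a coinitial sequence can always be replaced by a decreasing one, so this is harmless but should be said). You also target the zero-set formulation of almost $P$ rather than the $G_\delta$ one, which is fine since the paper records their equivalence for these spaces. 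What your route buys: it is self-contained, and, as you note, your piecewise-constant $h$ sidesteps the series summation that interpolation arguments typically need, so Cauchy completeness of $F$ is never used — your argument actually proves the equivalence under the weaker hypothesis $\mathrm{cf}(F)=\omega_\circ$ alone. What the paper's route buys is brevity and uniformity: Lemma \ref{11} treats all $F\in\mathcal{F}$ at once with no case split on whether $F=\mathbb{R}$.
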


To prove this theorem, we will need the following two subsidiary results which we reproduce to make this article self-contained:

\begin{lemma}\label{10}[Theorem 2.7 \cite{ACG2004}] 
For any topological space $X$ and for any totally ordered field $F$, the zero sets in $X$ with respect to the field $F$ are $G_\delta$ subsets of $X$ if and only if cf(F)$=\omega_\circ$.
\end{lemma}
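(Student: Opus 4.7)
The plan is to split the biconditional into its two natural directions and handle each independently; the forward (sufficiency) direction works for \emph{every} space $X$ at once, while the reverse (necessity) direction is really a statement of the form \textquotedblleft if $\mathrm{cf}(F)\neq\omega_0$, then we can exhibit some space $X$ and some $f\in C(X,F)$ whose zero set fails to be $G_\delta$.\textquotedblright

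For the sufficiency direction, assume $\mathrm{cf}(F)=\omega_0$, so we can fix a sequence $0<\mu_1<\mu_2<\cdots$ in $F$ which is cofinal; then for every positive $\epsilon\in F$ there exists $n$ with $\mu_n>1/\epsilon$, equivalently $1/\mu_n<\epsilon$. For any topological space $X$ and any $f\in C(X,F)$, I would write
\[
Z(f)\;=\;\bigcap_{n=1}^{\infty} f^{-1}\bigl((-1/\mu_n,\,1/\mu_n)\bigr).
\]
The inclusion $\subseteq$ is trivial. For $\supseteq$, if $|f(x)|<1/\mu_n$ for every $n$ but $|f(x)|=\delta>0$, then cofinality gives some $n$ with $1/\mu_n<\delta$, a contradiction. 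Each set on the right is open since $f$ is continuous and $(-1/\mu_n,1/\mu_n)$ is open in the order topology of $F$, so $Z(f)$ is $G_\delta$.

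For the necessity direction, I would argue contrapositively: assume $\mathrm{cf}(F)>\omega_0$, and take $X=F$ itself equipped with its order topology together with the continuous function $f=\mathrm{id}\colon F\to F$, so that $Z(f)=\{0\}$. If $\{0\}$ were $G_\delta$, write $\{0\}=\bigcap_{n=1}^{\infty}U_n$ with each $U_n$ open; for every $n$ pick $\epsilon_n>0$ in $F$ with $(-\epsilon_n,\epsilon_n)\subseteq U_n$. I then claim that the countable set $\{1/\epsilon_n:n\in\mathbb{N}\}$ is cofinal in the positive cone of $F$: otherwise some $M>0$ in $F$ satisfies $1/\epsilon_n\leq M$, hence $\epsilon_n\geq 1/M$, for all $n$, and then the nonzero element $1/(2M)$ lies in $(-\epsilon_n,\epsilon_n)\subseteq U_n$ for every $n$, forcing $1/(2M)\in\{0\}$, absurd. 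Since the positive cone is itself cofinal in $F$, this yields a countable cofinal subset of $F$, contradicting $\mathrm{cf}(F)>\omega_0$.

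The only step that requires any care is the cofinality argument in the second direction; everything else is a routine application of the definitions of the order topology and of cofinality. There is no genuine obstacle — the lemma is essentially a translation of \textquotedblleft countable neighbourhood basis at $0\in F$\textquotedblright{} into the language of cofinal sequences.
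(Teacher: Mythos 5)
The paper does not actually prove this lemma: it is imported verbatim as Theorem 2.7 of \cite{ACG2004} (``which we reproduce to make this article self-contained'' refers only to the statement), so there is no in-paper argument to compare yours against; your proof is correct and genuinely self-contained. Two points deserve to be made explicit. First, you are right that the literal parsing ``for all $X$ and all $F$, (zero sets are $G_\delta$) iff ($\mathrm{cf}(F)=\omega_0$)'' cannot be intended --- for a discrete $X$ every zero set is $G_\delta$ regardless of $F$ --- and your reading (sufficiency uniformly in $X$, necessity by exhibiting a single witness space) is the only sensible one; it also matches how the lemma is used in the proof of Theorem \ref{9}, where only the sufficiency half is invoked. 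Second, in the necessity direction you silently pass from $\mathrm{cf}(F)\neq\omega_0$ to $\mathrm{cf}(F)>\omega_0$; this is legitimate because an ordered field has no largest element (as $x+1>x$), so its cofinality is an infinite regular cardinal, but it is worth a sentence. With those caveats both halves are sound: the sufficiency half is the standard computation $Z(f)=\bigcap_n f^{-1}\left((-1/\mu_n,\,1/\mu_n)\right)$, and the necessity half correctly converts a putative $G_\delta$ representation of $\{0\}$ in $X=F$ into a countable cofinal set $\{1/\epsilon_n\}$, using that every neighbourhood of $0$ in the order topology contains a symmetric interval $(-\epsilon,\epsilon)$ (true since $F$ has no endpoints). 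This is almost certainly the same mechanism as in \cite{ACG2004}, and it usefully documents an argument the present paper leaves entirely to the reference.
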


\begin{lemma}\label{11}\cite{AKPG2004}
Let F be a totally ordered field in $\mathcal{F}$. Furthermore let $V$ be a $G_\delta$ set in a space $X$ with a compact set $S\subseteq V$. Then there exists a zero set $Z$ of $F$-valued continuous function on $X$ such that $S\subseteq Z\subseteq V$.
\end{lemma}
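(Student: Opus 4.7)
The plan is to mimic the classical argument---given a compact set inside a $G_\delta$ set, interpolate a zero set---while reworking the real-valued ingredients so that they survive over an arbitrary $F\in\mathcal{F}$. Write $V=\bigcap_{n\in\mathbb{N}}V_n$ with each $V_n$ open. I will first construct, for every $n$, a function $f_n\in C(X,F)$ with $0\le f_n\le 1$, $f_n\equiv 0$ on $S$, and $f_n\equiv 1$ on $X\setminus V_n$, hence $S\subseteq Z(f_n)\subseteq V_n$. Then I will glue these $f_n$ into a single continuous function $f\in C(X,F)$ with $Z(f)=\bigcap_n Z(f_n)$ by summing them against carefully chosen coefficients in $F$.

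\emph{Local separation via complete $F$-regularity.} Fix $n$ and $x\in S$. Since $x\notin X\setminus V_n$ and $X$ is completely $F$-regular, pick $h_{n,x}\in B(X,F)$ with $h_{n,x}(x)=0$ and $h_{n,x}\equiv 1$ on $X\setminus V_n$. The set $W_{n,x}=\{y:|h_{n,x}(y)|<\tfrac{1}{2}\}$ is open, contains $x$, and lies inside $V_n$. Using compactness of $S$ extract a finite subcover $W_{n,x_1},\dots,W_{n,x_{k_n}}$. Invoking the lattice structure of $C(X,F)$, set $g_n=\min(|h_{n,x_1}|,\dots,|h_{n,x_{k_n}}|,1)$, so $g_n\equiv 1$ on $X\setminus V_n$ and $g_n<\tfrac{1}{2}$ everywhere on $S$; then truncate: $f_n=(2g_n-1)\vee 0\in C(X,F)$. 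This $f_n$ is bounded between $0$ and $1$, vanishes on $S$, and equals $1$ on $X\setminus V_n$, as required.

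\emph{Summing inside $F$.} Because $\mathrm{cf}(F)=\omega_0$, one can pick a strictly decreasing sequence $\delta_n>0$ in $F$ with $\delta_n\to 0$ (take reciprocals of a cofinal $\omega$-sequence and thin out). Let $c_n=\delta_n-\delta_{n+1}>0$; the partial sums telescope to $\delta_1-\delta_{N+1}$, hence form a Cauchy sequence in $F$. Since $0\le f_n\le 1$, for any $M<N$ and any $y\in X$ we get $\bigl|\sum_{n=M+1}^{N}c_nf_n(y)\bigr|\le \delta_{M+1}$, so the partial sums of $\sum c_nf_n$ are uniformly Cauchy in $y$. Cauchy completeness of $F$ produces a pointwise limit $f\colon X\to F$, and a standard $\varepsilon/3$ argument (which is valid in any ordered field) transports continuity across uniform limits, so $f\in C(X,F)$. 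Positivity of each $c_n$ and each $f_n$ then forces $f(y)=0$ iff every $f_n(y)=0$, hence $Z(f)=\bigcap_n Z(f_n)$, which is sandwiched between $S$ and $\bigcap_n V_n=V$.

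The main obstacle I anticipate is the summation step: the textbook choice $c_n=2^{-n}$ collapses as soon as $F$ is non-archimedean (which the family $\mathcal{F}$ explicitly permits), because $2^{-n}$ need not converge to $0$ in $F$. This is exactly where the two hypotheses on $F$ pull their weight---countable cofinality is what lets us manufacture some null sequence in $F$ (from which telescoping yields genuinely summable positive coefficients), and Cauchy completeness is what delivers the limit function and, through uniform convergence, its continuity.
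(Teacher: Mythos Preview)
The paper does not actually prove this lemma; it is merely quoted from \cite{AKPG2004} and then immediately used in the proof of Theorem~\ref{9}. So there is no ``paper's own proof'' to compare against.

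That said, your argument is correct and is precisely the kind of proof one expects in the cited source: build separating functions $f_n$ with $S\subseteq Z(f_n)\subseteq V_n$ from complete $F$-regularity and compactness of $S$, then assemble them into a single $f$ with $Z(f)=\bigcap_n Z(f_n)$. Your handling of the summation step is the crux and is done carefully---you correctly observe that the naive $2^{-n}$ weights fail in a non-archimedean $F$, and you use $\mathrm{cf}(F)=\omega_0$ to manufacture a genuine null sequence $(\delta_n)$ in $F$, telescope to obtain positive summable coefficients $c_n=\delta_n-\delta_{n+1}$, invoke Cauchy completeness of $F$ for the pointwise limit, and transport continuity via the $\varepsilon/3$ argument (which is valid verbatim in any ordered field). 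The identification $Z(f)=\bigcap_n Z(f_n)$ from nonnegativity of all summands is also sound. Nothing is missing.
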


Proof of the theorem \ref{9}: $(2)\Rightarrow (1)$: Let $X$ be almost $P$-space. Let $Z$ be a non-empty zero set of $F$-valued continuous function on $X$. Then by Lemma \ref{10}, $Z$ is a non-empty $G_\delta$ set in $X$ and therefore $int_X Z\neq \phi$. Thus $X$ is almost $P_F$-space. 

$(1)\Rightarrow (2):$ Let $X$ be almost $P_F$-space and $G$ be non-empty $G_\delta$ set in $X$. Choose a point $p$ from $G$. By using Lemma \ref{11}, we can produce a zero set $Z$ in $X$ of $F$-valued continuous function such that $p\in Z\subseteq G$. As $X$ is almost $P_F$-space, it follows that $int_X Z\neq \phi$, consequently $int_X G\neq \phi $. Hence $X$ is almost $P$-space.

The following proposition is a description of almost $P_F$-spaces in terms of an intermediate ring.

\begin{theorem}\label{12}
Let $A(X,F)$ be an intermediate ring. Then $X$ is almost $P_F$ if and only if each fixed maximal ideal $M^p_A\equiv \{f\in A(X,F):f(p)=0\}, p\in X$ of $A(X,F)$ is a $z^\circ$-ideal.
\end{theorem}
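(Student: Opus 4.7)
The plan is to reduce both directions to the explicit formula $P_f = \{g \in A(X,F) : int_X Z(f) \subseteq int_X Z(g)\}$ from Theorem \ref{3}, combined with the equivalent formulation of the almost $P_F$ condition as: every zero set in $X$ is regular closed, i.e.\ $Z(f) = cl_X\, int_X Z(f)$ for every $f \in C(X,F)$.

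For the ``only if'' direction, assume $X$ is almost $P_F$ and fix $p \in X$. To verify that the fixed maximal ideal $M^p_A$ is a $z^\circ$-ideal, pick any $f \in M^p_A$ and any $g \in P_f$. Theorem \ref{3} gives $int_X Z(f) \subseteq int_X Z(g)$, and taking closures in $X$ yields $cl_X\, int_X Z(f) \subseteq cl_X\, int_X Z(g) \subseteq Z(g)$. The almost $P_F$ hypothesis makes $Z(f)$ regular closed, so $p \in Z(f) = cl_X\, int_X Z(f) \subseteq Z(g)$, i.e.\ $g(p) = 0$ and $g \in M^p_A$.

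For the ``if'' direction, assume every $M^p_A$ is a $z^\circ$-ideal, and suppose towards a contradiction that some $f \in C(X,F)$ has $Z(f) \neq \emptyset$ but $int_X Z(f) = \emptyset$. Replace $f$ by $h = f/(1+|f|) \in B(X,F) \subseteq A(X,F)$; since $Z(h) = Z(f)$ and $int_X Z(h) = int_X Z(f) = \emptyset$, we may as well assume $f \in A(X,F)$. Pick $p \in Z(f)$, so that $f \in M^p_A$. But with $int_X Z(f) = \emptyset$ the containment condition in Theorem \ref{3} is vacuous, forcing $P_f = A(X,F)$; the $z^\circ$ property of $M^p_A$ then gives $A(X,F) = P_f \subseteq M^p_A$, contradicting $1 \notin M^p_A$. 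Hence every non-empty zero set in $X$ has non-empty interior, so $X$ is almost $P_F$.

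The only mildly delicate point is the reduction in the converse direction from a test function $f \in C(X,F)$ to one in $A(X,F)$, which is needed because an arbitrary $f \in C(X,F)$ might fail to lie in any $M^p_A$; the standard substitution $f \mapsto f/(1+|f|)$ handles this by preserving $Z(f)$ (and hence its interior) while landing inside $B(X,F) \subseteq A(X,F)$. Once that reduction is in place, everything follows directly from Theorem \ref{3} and the regular-closed reformulation of the almost $P_F$ property.
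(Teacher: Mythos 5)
Your proposal is correct and follows essentially the same route as the paper: the forward direction is the identical computation with Theorem \ref{3} and the regular-closed reformulation of almost $P_F$, and your converse (showing $P_f=A(X,F)$ when $int_X Z(f)=\emptyset$, hence $M^p_A$ cannot be a proper $z^\circ$-ideal) is just an unpacked version of the paper's appeal to the fact that every member of a proper $z^\circ$-ideal is a zero divisor. Your explicit reduction $f\mapsto f/(1+|f|)$ to land in $B(X,F)\subseteq A(X,F)$ is a point the paper glosses over by simply taking the witness in $B(X,F)$, and it is handled correctly.
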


\begin{proof}
Let $X$ be almost $P_F$-space and $p\in X$. Choose $f\in M^p_A$ and $g\in P_f\equiv $ the intersection of all minimal prime ideals of $A(X,F)$ which contain $f$. Then by Theorem \ref{3}, $int_X Z(f)\subseteq int_X Z(g)$. Again the hypothesis $X$ is almost $P_F$-space ensures us that $Z(f)= cl_X int_X Z(f)\subseteq cl_X int_X Z(g)=Z(g)$. Since $f\in M^p_A$, we have $f(p)=0$, consequently $g(p)=0$ and therefore $g\in M^p_A$. Then $P_f\subseteq M^p_A$ and hence $M^p_A$ is a $z^\circ$-ideal of $A(X,F)$. To prove the other containment let $X$ be not an almost $P_F$-space. This means that there is an $f\in B(X,F)$ such that $Z(f)\neq \phi $ but $int_X Z(f)=\phi$. Choose a point $p\in Z(f)$. We note that $f$ is not a divisor of zero in $A(X,F)$ and $f\in M^p_A$. Since all the members of a proper $z^\circ$-ideal in a ring are divisors of zero, it follows that $M^p_A$ is not a $z^\circ$-ideal of $A(X,F)$.
\end{proof}

A special case of this theorem with the choice $F=\mathbb{R}$, yields the following new characterization of almost $P$-spaces via an arbitrary intermediate ring $A(X)$ of real valued continuous functions on $X$.

\begin{corollary}
	$X$ is an almost $P$-space if and only if each fixed maximal ideal of $A(X)$ is a $z^\circ$-ideal.
\end{corollary}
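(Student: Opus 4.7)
The plan is to treat this corollary as a direct specialization of Theorem \ref{12} rather than as a standalone result. First I would observe the standing identifications: with the choice $F = \mathbb{R}$, the notion of completely $F$-regular space collapses to Tychonoff (completely regular $T_2$), the ring $B(X,\mathbb{R})$ coincides with the classical $C^*(X)$ and $C(X,\mathbb{R})$ with $C(X)$, so that an intermediate ring $A(X)$ in the traditional sense is precisely $A(X,\mathbb{R})$. Likewise, as noted in the introduction, an almost $P_F$-space with $F = \mathbb{R}$ is exactly an almost $P$-space. Finally, the fixed maximal ideal $M^p_A = \{f \in A(X) : f(p) = 0\}$ coincides verbatim with the object $M^p_A$ appearing in the statement of Theorem \ref{12}.

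Once these identifications are in place, the entire content of the corollary is obtained by rereading Theorem \ref{12} with $F = \mathbb{R}$ and $A(X,F) = A(X)$. In particular, both the forward direction (using Theorem \ref{3} plus the regular-closedness of zero sets in an almost $P$-space to show $P_f \subseteq M^p_A$) and the converse (producing an $f \in B(X,\mathbb{R}) \subseteq A(X)$ with $Z(f) \ne \emptyset$ but $\mathrm{int}_X Z(f) = \emptyset$ whenever $X$ fails to be almost $P$, and noting that such an $f$ is not a zero-divisor yet lies in $M^p_A$ for $p \in Z(f)$) translate without change.

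Since no new mathematical content is required beyond the specialization, there is no genuine obstacle; the only thing worth flagging is that nothing must be ``re-proved'' and that the argument works because $B(X,\mathbb{R}) \subseteq A(X)$ ensures the witness function $f$ used in the contrapositive step of Theorem \ref{12} is available inside any intermediate ring $A(X)$. I would therefore present the proof simply as: ``Apply Theorem \ref{12} with $F = \mathbb{R}$, noting that almost $P_\mathbb{R}$-spaces are exactly almost $P$-spaces and that $A(X,\mathbb{R}) = A(X)$.''
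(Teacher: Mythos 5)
Your proposal is correct and matches the paper's own treatment exactly: the corollary is presented there as an immediate special case of Theorem \ref{12} with $F=\mathbb{R}$, under precisely the identifications you list. Nothing further is needed.
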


We shall now give a characterisation of almost $P_F$-spaces via maximal ideals ( fixed or free ) of the parent ring $C(X,F)$. For this purpose we require a Gelfand-Kolmogoroff like theorem [ see \cite{GJ}, Theorem 7.3] for the description of maximal ideals of $C(X,F)$ and we make an approch similar to that in \cite{GJ}, Chapter 6. We recall the set $\beta_F X$ of all $z_F$-ultrafilters on $X$. For each point $p\in X, A_{p,F} \equiv \{ Z \in Z(X,F):p \in Z \}$ is a (fixed ) $z_F$-ultrafilter on $X$. Therefore $X$ is a ready made index set for the family of all fixed $z_F$-ultrafilters on $X$. Let us enlarge $X$ to a bigger set $\widehat{X}$, to serve as an index set for the family of all $z_F$-ultrafilters on $X$ (fixed and free both). For each $p\in \widehat{X}$, let the corresponding $z_F$-ultrafilter be denoted by $A^p_F$ with the stipulation that whenever $p\in X, A^p_F\equiv A_{p,F}$. For each $Z\in Z(X,F)$, set $\overline{Z}=\{p\in \widehat{X}: Z\in A^p_F\}$. It is not hard to check that $\{ \overline{Z}: Z\in Z(X,F)\}$ is a base for the closed sets of some topology, essentially the Stone-topology on $\widehat{X}$, as mentioned in the introduction of this article. Since there is a one-to-one correspondence between the maximal ideals of $C(X,F)$ and the $z_F$-ultrafilters on $X$ [ see Theorem 2.19(5), \cite{ACG2004}]; via the $Z_F$-map$: M\mapsto Z_F{M}(f\mapsto Z_F(f))$, it is easy to verify that for each $p\in \widehat{X}$, the corresponding maximal ideal $M^p_F$ in $C(X,F)$ is determined by the formula $M^p_F=\{ f\in C(X,F): p\in cl_{\widehat{X}} Z(f)\}$. Here we note that the space $\widehat{X}$, constructed above is just a homeomorphic copy of $\beta_F X$ constructed in \cite{AR2016}, Section 3).

\begin{theorem}\label{13}
$X$ is an almost $P_F$-space if and only if every maximal ideal of $C(X,F)$ is a $z^\circ$-ideal.
\end{theorem}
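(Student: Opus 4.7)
The plan is to exploit two tools already in hand: the Gelfand--Kolmogoroff type description $M_F^p=\{f\in C(X,F):p\in \operatorname{cl}_{\widehat{X}}Z(f)\}$ of maximal ideals of $C(X,F)$ indexed by $\widehat{X}$, and the formula $P_f=\{g\in C(X,F):\operatorname{int}_X Z(f)\subseteq\operatorname{int}_X Z(g)\}$ from Theorem \ref{3}. Together these let me translate the condition ``$M_F^p$ is a $z^\circ$-ideal'' into a statement purely about how zero-sets and their interiors behave in $X$, which is exactly what the almost $P_F$ hypothesis controls.

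For the forward direction, assume $X$ is almost $P_F$ and fix $p\in\widehat{X}$. I would pick $f\in M_F^p$ and $g\in P_f$; Theorem \ref{3} yields $\operatorname{int}_X Z(f)\subseteq\operatorname{int}_X Z(g)$. The almost $P_F$ hypothesis says every zero set is regular closed, so
\[
Z(f)=\operatorname{cl}_X\operatorname{int}_X Z(f)\subseteq\operatorname{cl}_X\operatorname{int}_X Z(g)=Z(g).
\]
Taking closures in $\widehat{X}$ gives $\operatorname{cl}_{\widehat{X}}Z(f)\subseteq\operatorname{cl}_{\widehat{X}}Z(g)$, so $p\in\operatorname{cl}_{\widehat{X}}Z(g)$ and hence $g\in M_F^p$. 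This is the only place in the argument where I genuinely need the ``free'' part of $\widehat{X}$, as opposed to the setting of Theorem \ref{12}; the passage from $Z(f)\subseteq Z(g)$ in $X$ to the same containment of their closures in $\widehat{X}$ is what makes the argument go through uniformly for both fixed and free maximal ideals.

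For the converse, I do not need to rebuild anything: if every maximal ideal of $C(X,F)$ is a $z^\circ$-ideal, then in particular every fixed maximal ideal $M_A^p$ with $A=C$ is a $z^\circ$-ideal, and Theorem \ref{12} applied to the intermediate ring $A(X,F)=C(X,F)$ immediately forces $X$ to be almost $P_F$. So the whole converse reduces to citing Theorem \ref{12}.

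I do not foresee a real obstacle; the only point requiring a little care is justifying $\operatorname{cl}_{\widehat{X}}Z(f)\subseteq\operatorname{cl}_{\widehat{X}}Z(g)$ from $Z(f)\subseteq Z(g)$, which is a routine monotonicity of closure once one recalls that $\widehat{X}$ carries the Stone-type topology generated by the sets $\overline{Z}$ and that for $Z\in Z(X,F)$ one has $\overline{Z}=\operatorname{cl}_{\widehat{X}}Z$. Other than that, the proof is essentially an assembly of Theorems \ref{3} and \ref{12} together with the Gelfand--Kolmogoroff description of $M_F^p$.
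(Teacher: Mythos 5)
Your proposal is correct and follows essentially the same route as the paper: the forward direction uses Theorem \ref{3} plus the regular-closedness of zero sets to pass from $\operatorname{int}_X Z(f)\subseteq \operatorname{int}_X Z(g)$ to $Z(f)\subseteq Z(g)$ and then to closures in $\widehat{X}$, and the converse is exactly the paper's reduction to Theorem \ref{12} with $A(X,F)=C(X,F)$. No gaps.
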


\begin{proof}
If $X$ is not an almost $P_F$-space, then from Theorem \ref{12}, on choosing $A(X,F)=C(X,F)$, we can say that there exists at least one fixed maximal ideal of $C(X,F)$ which is not a $z^\circ$-ideal. Conversely, let $X$ be almost $P_F$-space. A typical maximal ideal of $C(X,F)$ is of the form $M^p_F$, for some $p\in \widehat{X}$, as described above. Choose $f\in M^p_F$ and $g\in P_f\equiv $ the intersection of all minimal prime ideals in $C(X,F)$ which contain $f$. Then from Theorem \ref{3}, we can write $int_X Z(f)\subseteq int_X Z(g)$, consequently in view of the almost $P_F$ condition on $X$ we see that $Z(f)= cl_X int_X Z(f)\subseteq Z(g)$ and hence $cl_{\widehat{X}} Z(f)\subseteq cl_{\widehat{X}} Z(g)$. Since $f\in M^p_F$, it follows that $p\in cl_{\widehat{X}} Z(f)$, therefore $g\in M^p_F$. Thus $P_f\subseteq M^p_F$ and hence $M^p_F$ is a $z^\circ$-ideal of $C(X,F)$. 
\end{proof}
 
\begin{theorem}\label{14}
Let $X$ be an almost $P_F$-space. Then for an intermediate ring $A(X,F)$, each maximal ideal of $A(X,F)$ is a $z^\circ$-ideal if and only if $A(X,F)=C(X,F)$.
\end{theorem}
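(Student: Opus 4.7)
The plan is to handle the two directions separately, with the forward implication being essentially free from the preceding theorem and the reverse implication being a direct adaptation of the argument in Theorem \ref{6}.

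For the easy direction, assume $A(X,F)=C(X,F)$. Since $X$ is almost $P_F$, Theorem \ref{13} already tells us that every maximal ideal of $C(X,F)$ is a $z^\circ$-ideal, and there is nothing more to prove.

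For the substantive direction, assume every maximal ideal of $A(X,F)$ is a $z^\circ$-ideal and pick an arbitrary $f\in C(X,F)$. The idea is to mimic Theorem \ref{6}: show that $h\equiv \frac{1}{1+\lvert f\rvert}$, which is a bounded function and therefore lies in $B(X,F)\subseteq A(X,F)$, is a multiplicative unit of $A(X,F)$. If $h$ were not a unit, it would be contained in some maximal ideal $M$ of $A(X,F)$, and by hypothesis $M$ is a $z^\circ$-ideal. In a reduced ring each member of a proper $z^\circ$-ideal must be a zero-divisor (as recorded in the discussion after Theorem 1.1 of the paper), so $h$ would have to be a divisor of zero in $A(X,F)$. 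However, $Z(h)=\emptyset$ because $1+\lvert f\rvert$ is never zero, so $\mathrm{int}_X Z(h)=\emptyset$, and by the characterisation established in the proof of Theorem \ref{35}, $h$ is not a divisor of zero in $A(X,F)$. This contradiction forces $h$ to be a unit.

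Once $h$ is a unit in $A(X,F)$, its inverse $1+\lvert f\rvert$ belongs to $A(X,F)$. Noting that $\frac{f}{1+\lvert f\rvert}\in B(X,F)\subseteq A(X,F)$, we recover $f=\frac{f}{1+\lvert f\rvert}\cdot (1+\lvert f\rvert)\in A(X,F)$. Since $f\in C(X,F)$ was arbitrary, $C(X,F)\subseteq A(X,F)$, giving equality. The main (and really only) obstacle is verifying that the zero-divisor characterisation from Theorem \ref{35} applies inside an arbitrary maximal ideal rather than inside a generic proper ideal containing only zero-divisors, but this is automatic because a maximal $z^\circ$-ideal is itself a proper $z^\circ$-ideal; no new technical input is needed beyond Lemma \ref{5} and the earlier theorems.
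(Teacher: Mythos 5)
Your proof is correct and follows essentially the same route as the paper: the forward direction is quoted from Theorem \ref{13}, and the substantive direction rests on the observation that a non-unit with empty zero set sits inside a maximal ideal that cannot be a $z^\circ$-ideal because that element is not a zero-divisor. The only cosmetic difference is that the paper states this direction contrapositively (assuming $A(X,F)\neq C(X,F)$ and exhibiting a bad maximal ideal), while you run the Theorem \ref{6}-style argument directly to show every $\frac{1}{1+\lvert f\rvert}$ is a unit.
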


\begin{proof}
If $A(X,F)=C(X,F)$, then from Theorem \ref{13}, it is immediatethat each maximal ideal of $A(X,F)$ is a $z^\circ$-ideal. Conversely, let $A(X,F)\neq C(X,F)$. Then there exists an $f\in A(X,F)$, such that $Z(f)=\phi$ but $f$ is not invertible in this ring. Now there exists a maximal ideal $M$ in $A(X,F)$ containing $f$, as $f$ is a nonunit in this ring. Since $f$ is a not a divisor of zero in $A(X,F)$, it follows that $M$ can not be a $z^\circ$-ideal of $A(X,F)$.
\end{proof}

\bibliographystyle{plain}

}
\end{document}